\newtheorem{theorem}{Theorem}
\newtheorem{lemma}[theorem]{Lemma}
\theoremstyle{definition}
\newtheorem*{remark}{Remark}
\newcommand{\Z}{\mathbb Z}
\newcommand{\E}{\mathbb E}
\definecolor{db}{rgb}{0.1,0,0.75}
\definecolor{lm}{cmyk}{0 ,1,0,0}
\newcommand{\Q}{\mathbb Q}
\newenvironment{pfofthm}[1]
{\par\vskip2\parsep\noindent{\sc Proof of Theorem\ #1. }}{{\hfill
$\Box$}
\par\vskip2\parsep}
\newtheorem*{LinMeshWall}{Linial--Meshulam--Wallach theorem}
\newcommand{\prop}{\mathcal P}
\newcommand{\pr}{\mathbb P}
\newcommand{\N}{\mathbb N}
\newcommand{\old}[1]{}
\newcommand{\qrs}{\text{$q$-reducing set }}
\title[The threshold for integer homology]{The threshold for integer homology in random $d$-complexes}
\author[Hoffman]{Christopher Hoffman}
\address{University of Washington}
\email{hoffman@math.washington.edu}
\author[Kahle]{Matthew Kahle}
\address{The Ohio State University}
\email{mkahle@math.osu.edu}
\author[Paquette]{Elliot Paquette}
\address{Weizmann Institute of Science}
\email{paquette@weizmann.ac.il}
\date{\today}
\begin{document}

\maketitle

\begin{abstract}  Let $Y \sim Y_d(n,p)$ denote the Bernoulli random $d$-dimensional simplicial complex.
We answer a question of Linial and Meshulam from 2003, showing that the threshold for vanishing of homology $H_{d-1}(Y; \Z)$ is less than $80d \log n / n$. This bound is tight, up to a constant factor.

\end{abstract}

\section{Introduction}

Define $Y_d(n,p)$ to be the probability distribution on all $d$-dimensional simplicial complexes with $n$ vertices, with complete $(d-1)$-skeleton and with each $d$-dimensional face included independently with probability $p$.  We use the notation $Y \sim Y_d(n,p)$ to mean that $Y$ is chosen according to the distribution $Y_d(n,p)$; note the $1$-dimensional case $Y_1(n,p)$ is equivalent to the Erd\H{o}s--R\'enyi random graph $G \sim G(n,p)$.

Results in this area are usually as $n \to \infty$ and $p=p(n)$. We say that an event occurs {\it with high probability} (abbreviated w.h.p.) if the probability approaches one as the number of vertices $n \to \infty$. Whenever we use big-$O$ or little-$o$ notation, it is also understood as $n \to \infty$.

\medskip

A function $f=f(n)$ is said to be a {\it threshold} for a property $\mathcal{P}$ if whenever $p / f \to \infty$, w.h.p.\ $G \in \prop$, and whenever $p / f \to 0$, w.h.p.\ $G \notin \mathcal{P}$. In this case, one often writes that $f$ is {\it the} threshold, even though technically $f$ is only defined up to a scalar factor.
%

It is a fundamental fact of random graph theory (see for example Section 1.5 of \cite{JLR}) that every monotone property has a threshold. However, not every monotone property has a sharp threshold. For example, $1/n$ is the threshold for the appearance of triangles in $G(n,p)$, but this threshold is not sharp. In contrast, the Erd\H{o}s--R\'enyi theorem asserts that $\log n / n$ is a sharp threshold for connectivity. Classifying which graph properties have sharp thresholds is a problem which has been extensively studied; see for example the paper of Friedgut with appendix by Bourgain \cite{FB99}.

\medskip

The first theorem concerning the topology of $Y_d(n,p)$ was in the influential paper of Linial and Meshulam \cite{LM}.
Their results were extended by Meshulam and Wallach to prove the following far reaching extension of the Erd\H{o}s--R\'enyi theorem \cite{MW09}, where they described sharp vanishing thresholds for homology with field coefficients.
\begin{LinMeshWall}
Suppose that $d \ge 2$ is fixed and that $Y \sim Y_d(n,p)$. Let $\omega$ be any function such that  $\omega \to \infty$ as $n \to \infty$.
\begin{enumerate}
\item If $$p \le  \frac{d  \log{n} - \omega }{ n}$$ then w.h.p.\ $H_{d-1}(Y; \Z / q\Z) \neq 0$, and
\item if $$p \ge \frac{d  \log{n}+\omega }{ n}$$ then w.h.p.\ $H_{d-1}(Y; \Z / q\Z) = 0$.
\label{difficult}
\end{enumerate}
\end{LinMeshWall}

The $d=1$ case is equivalent to the Erd\H{o}s--R\'enyi theorem. The Linial--Meshulam theorem is the case $d=2$, $q=2$, and the Meshulam--Wallach theorem is the general case $d \ge 2$ arbitrary and $q$ any fixed prime. In closing remarks of \cite{LM}, Linial and Meshulam asked ``Where is the threshold for the vanishing of $H_1(Y,\Z)$?''

By the universal coefficient theorem, $H_{d-1}(Y;\Z/q\Z)=0$ for every prime $q$ implies that $H_{d-1}(Y; \Z)=0$, so one may be tempted to conclude that the Meshulam--Wallach theorem already answers the question of the threshold for $\Z$-coefficients. This is not the case, however, since we are concerned with not just a single simplicial complex, but with a sequence of complexes as $n \to \infty$, and there might very well be torsion growing with $n$. The Meshulam--Wallach Theorem holds for $q$ fixed, and can be made to work for $q$ growing slowly enough compared with $n$. But it does not seem possible to extend the cocycle-counting arguments from \cite{LM} and \cite{MW09} to cover the case when $q$ is growing much faster than polynomial in $n$.

On the surface of things, this might actually be a big problem.  A complex $X$ is called $\Q$-acyclic if $H_0(X,\Q) = \Q$ and $H_i(X,\Q) = 0$ for $i \geq 1.$  Kalai showed that for a uniform random $\Q$-acyclic $2$-dimensional complex $T$ with $n$ vertices and ${n-1} \choose 2$ edges, the expected size of the torsion group $|H_1(T;\Z)|$ is of order at least $\exp(c n^2)$ for come constant $c > 0$ \cite{Kalai83}. On the other hand, the largest possible torsion for a $2$-complex on $n$ vertices is of order at most $\exp(C n^2)$ for some other constant $C > 0$, so Kalai's random $\Q$-acyclic complex provides a model of random simplicial complex which is essentially the worst case scenario for torsion.

We mention in passing that another approach to homology-vanishing theorems for random simplicial complexes is ``Garland's method'' \cite{Garland}, with various refinements due to \.Zuk \cite{Zuk, zuk2}, Ballman--\'Swi\k{a}tkowski \cite{BS99}, and others. These methods have been applied in the context of random simplicial complexes, see for example \cite{hkp12,kahle14}. However, it must be emphasized that these methods only work over a field of characteristic zero; they do not detect torsion in homology. A different kind of argument is needed to handle homology with $\Z$ coefficients.

\medskip

The fundamental group $\pi_1(Y)$ of the random $2$-complex $Y \sim Y_2(n,p)$ was studied earlier by Babson, Hoffman, and Kahle \cite{BHK11}, and the threshold face probability for simple connectivity was shown to be of order $1 / \sqrt{n}$. Until now, there seems to have been no upper bound on the vanishing threshold for integer homology for random $2$-complexes, other than this.

Our main result is that the threshold for vanishing of integral homology agrees with the threshold for field coefficients, up to a constant factor. In particular we have the following.

%

\begin{theorem} 
\label{thm:main1}
Let $d \ge 2$ be fixed and $Y \sim Y_d(n,p)$. If
$$p \ge \frac{80d \log{n}}{ n}$$ then  $H_{d-1}(Y; \Z)  = 0$ w.h.p.
\end{theorem}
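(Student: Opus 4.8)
\medskip

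\noindent\emph{Proof proposal.} The plan is to reduce the statement to an explicit combinatorial ``repair'' procedure in which every coefficient stays in $\{0,\pm1\}$, so that torsion never enters. Since $Y$ has complete $(d-1)$-skeleton, the cycle group $Z_{d-1}(Y)$ equals that of the complete $(d-1)$-skeleton on $[n]$ and is independent of which $d$-faces are present, and $H_{d-1}(Y;\Z)=0$ is equivalent to $\partial_d C_d(Y;\Z)=Z_{d-1}(Y)$. I would use the basis of $Z_{d-1}(Y)$ coming from the cone over the vertex $n$: the boundaries $\partial_d(\{n\}\cup\sigma)$, $\sigma\in\binom{[n-1]}{d}$, form a $\Z$-basis, because the identities $\partial_d^2=0$ on the $(d+1)$-simplices $\{n\}\cup\rho$, $\rho\in\binom{[n-1]}{d+1}$, express the boundary of every $n$-avoiding $d$-face as a $\{0,\pm1\}$-combination of cone boundaries. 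Writing $\mathcal G=\{\sigma:\{n\}\cup\sigma\in Y\}$ and $\mathcal B=\binom{[n-1]}{d}\setminus\mathcal G$, it therefore suffices to prove that, w.h.p., $\partial_d(\{n\}\cup\tau)\in\partial_dC_d(Y;\Z)$ for every $\tau\in\mathcal B$. The elementary move is the same $\partial_d^2=0$ relation on $\{n\}\cup\tau\cup\{v\}$ (with $\tau=\{v_1,\dots,v_d\}$ and $v\in[n-1]\setminus\tau$), namely
\[
\partial_d(\{n\}\cup\tau)\ =\ \pm\,\partial_d(\tau\cup\{v\})\ \pm\ \sum_{i=1}^{d}\pm\,\partial_d\!\big(\{n\}\cup((\tau\setminus\{v_i\})\cup\{v\})\big),
\]
all signs being $\pm1$: if the $n$-avoiding $d$-face $\tau\cup\{v\}$ lies in $Y$ and the $d$ ``sibling'' cone faces $\{n\}\cup((\tau\setminus\{v_i\})\cup\{v\})$ already have boundaries in $\partial_dC_d(Y;\Z)$, then so does $\partial_d(\{n\}\cup\tau)$. (One could instead observe that $H_{d-1}(Y;\Z)=0$ is equivalent to $H_{d-1}(Y;\Z/q)=0$ for every prime $q$, with the Hadamard bound $|\mathrm{tors}\,H_{d-1}(Y;\Z)|\le e^{O(n^d\log n)}$ reducing this to finitely many $q$ per $n$ and Meshulam--Wallach covering the small ones; but the large-$q$ range is precisely what the $\{0,\pm1\}$ move above is meant to sidestep.)

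\smallskip

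The plan is then to run the greedy repair process: maintain a family $\mathcal R\supseteq\mathcal G$ of cone faces whose boundary is known to lie in $\partial_dC_d(Y;\Z)$; repeatedly pick $\tau\in\mathcal B\setminus\mathcal R$ admitting a \emph{valid pivot} $v\in[n-1]\setminus\tau$ with $\tau\cup\{v\}\in Y$ and all $d$ siblings $\{n\}\cup((\tau\setminus\{v_i\})\cup\{v\})$ already in $\mathcal R$, and add $\tau$ to $\mathcal R$. A standard argument shows this ends with $\mathcal R=\binom{[n-1]}{d}$ (hence $H_{d-1}(Y;\Z)=0$) unless there is a nonempty \emph{blocking set} $S\subseteq\mathcal B$: a family of $d$-subsets of $[n-1]$ such that for every $\tau\in S$ and every valid pivot $v$ of $\tau$ some sibling $(\tau\setminus\{v_i\})\cup\{v\}$ lies in $S$. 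So the theorem reduces to: w.h.p.\ there is no nonempty blocking set. (For $d=1$ a blocking set is exactly a connected component of $Y$ avoiding the vertex $n$, so this step is a restatement of the connectivity half of the \ER theorem.)

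\smallskip

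To rule out blocking sets I would split on the local density $\Delta(S):=\max_{\rho}\#\{v:\rho\cup\{v\}\in S\}$ over $(d-1)$-subsets $\rho$. If $\Delta(S)\le n/(4d)$ then every $\tau\in S$ has at least $n/2$ \emph{escaping} pivots $v$ --- ones with no sibling of $\tau$ in $S$ --- and blocking forces all the corresponding faces $\tau\cup\{v\}$ to be absent from $Y$; a short argument shows $(\tau,v)\mapsto\tau\cup\{v\}$ is injective on escaping pairs (a collision would display one set as a sibling of the other lying in $S$), so these absence events are independent, $\PP[S\ \text{blocking}\mid\mathcal B]\le(1-p)^{n|S|/2}\le n^{-40d|S|}$, and summing against the crude count $\binom{\binom{n}{d}}{s}\le n^{ds}$ of candidate sets of size $s$ gives $\sum_{s\ge1}n^{-39ds}\to0$; the generous constant $80d$ is convenient exactly here. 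The remaining case --- blocking sets with $\Delta(S)>n/(4d)$ --- is, I expect, the main obstacle. Such an $S$ has the property that near every one of its faces, at the unit-swap scale, lie roughly $pn$ further faces of $S$, which should force $S$ to be both very large and rigidly structured; ruling these out seems to require a multi-scale union bound (or first iterating the elementary move along longer chains to gain flexibility), together with the usual concentration estimates for degrees and pivot counts and the bookkeeping that cone faces and $n$-avoiding faces form disjoint, hence independent, families of $(d+1)$-sets. I expect essentially all of the real work to sit in this last case.
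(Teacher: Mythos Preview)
Your proposal has a genuine gap: the dense blocking-set regime $\Delta(S) > n/(4d)$ is left unresolved, and you say so yourself. This is not a loose end but the heart of the matter. The dense regime is exactly where Linial--Meshulam and Meshulam--Wallach do their hardest work, and they rely on algebraic structure (coboundary expansion, weight inequalities for minimal cocycles over a fixed field) that your purely combinatorial blocking-set formulation does not obviously retain. Moreover, since the greedy repair can stall even when $H_{d-1}(Y;\Z)=0$ --- it only certifies boundaries through a restricted family of $\pm1$-relations --- you may be trying to rule out a strictly larger family of obstructions than actual homology classes, so the ``multi-scale union bound'' you gesture at could be harder, not easier, than the field case.

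Strikingly, the paper takes exactly the route you mention parenthetically and then dismiss. It does use the Hadamard/Smith-normal-form bound $|H_{d-1}(Y;\Z)_T| \le \exp(O(n^d))$ to cut the set of dangerous primes to $O(n^d)$ many, and the idea that handles the large-$q$ range is a bound $\PP[H_{d-1}(Y;\Z/q\Z) \neq 0] \le n^{-(d+1)}$ for $p \ge 40d\log n/n$ that is \emph{uniform in $q$}. This is obtained not by any cocycle enumeration but by a soft argument on the ``$q$-reducing set'' (those $d$-faces whose addition drops $\dim H_{d-1}(\,\cdot\,;\Z/q\Z)$): after about $4\binom{n}{d}$ random $d$-faces the expected $q$-reducing set is at most half of all faces (a pigeonhole on how many times the dimension can drop, regardless of $q$), and overlaying $k \approx 2(d+1)\log_2 n$ independent such complexes drives the per-face membership probability to $2^{-k} \le n^{-2(d+1)}$. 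With this uniform bound in hand, Theorem~\ref{thm:main1} follows by two-round sprinkling: couple $Y \supseteq Y_1 \cup Y_2$ with $Y_1,Y_2 \sim Y_d(n,40d\log n/n)$ independent, read off the $O(n^d)$ bad primes of $Y_1$, and apply the uniform bound to $Y_2$ (independent of that list) plus a union bound to kill them all with probability $1 - O(1/n)$. No dense-obstruction analysis is ever performed.
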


\begin{remark}
For the sake of simplicity, we make no attempt here to optimize the constant $80d$. We conjecture that the best possible constant is $d$; in other words we would guess that the Linial--Meshulam--Wallach theorem is still true with $\Z / q\Z$-coefficients replaced by $\Z$-coefficients. But to prove this, it seems that another idea will be required.
\end{remark}
%


\bigskip

Our main tool in proving Theorem \ref{thm:main1} is the following.

\begin{theorem}
\label{thm:main2}
Let $d \ge 2$ be fixed and let $q=q(n)$ be a sequence of primes. If $Y \sim Y_d(n,p)$ where  
$$p \ge \frac{40d \log{n}}{ n},$$
then
$$\pr(H_{d-1}(Y; \Z/q\Z) \neq 0) \leq \frac{1}{n^{d+1}}.$$
\end{theorem}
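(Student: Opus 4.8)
The plan is to bound $\pr(H_{d-1}(Y;\Z/q\Z)\neq 0)$ uniformly in the prime $q$ by a first-moment (union bound) argument over a carefully chosen family of potential nonzero $(d-1)$-cocycles, in the spirit of Meshulam--Wallach but pushed through for $q$ growing arbitrarily fast in $n$. The point is that a nonzero class in $H_{d-1}(Y;\Z/q\Z)$ is represented by a $(d-1)$-cocycle $\phi\colon \binom{[n]}{d}\to \Z/q\Z$ that is not a coboundary and that is \emph{supported on} $Y$, meaning every $d$-face $\sigma$ of $Y$ satisfies $(\delta\phi)(\sigma)=0$; by subtracting a coboundary we may normalize $\phi$ (e.g.\ demand $\phi$ vanishes on a fixed spanning ``tree-like'' collection of $(d-1)$-faces, or minimize its support). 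The first task is thus a \emph{structural/combinatorial isoperimetric} statement: any such normalized nonzero $\phi$ must have large support, i.e.\ the set of $(d-1)$-faces on which $\phi\neq 0$ has size at least $cn^{d-1}$ for some constant $c>0$ depending only on $d$. This is the analogue of the expansion estimate in \cite{MW09}, and crucially it is a statement about $\phi$ modulo coboundaries that does not reference $q$, so it transfers verbatim to $\Z$-coefficients / arbitrary prime modulus.

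Next I would run the union bound. Fix a candidate normalized cochain $\phi$ with support set $S$ of size $s\ge cn^{d-1}$. The event that $\phi$ is a cocycle of $Y$ means: for every $d$-face $\sigma$ whose boundary meets $S$ in a way that makes $\delta\phi(\sigma)\neq 0$, that face $\sigma$ must be \emph{absent} from $Y$. So I need a lower bound on the number of $d$-faces $\sigma$ with $\delta\phi(\sigma)\neq 0$; call such faces ``violated.'' Here is where the argument must be done with care to handle general $q$: over $\Z/q\Z$ with $q$ large, a single $d$-face contains $d+1$ of its $(d-1)$-subfaces, and $\delta\phi(\sigma)=\sum \pm\phi(\tau)$ could accidentally vanish even when several $\tau$'s are in $S$. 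The standard way around this is to select, greedily, a large sub-collection $S'\subseteq S$ and count $d$-faces that contain \emph{exactly one} $(d-1)$-face of $S'$ on which $\phi$ is nonzero -- for such a face $\delta\phi(\sigma)\ne 0$ automatically regardless of $q$. A $(d-1)$-face $\tau\in S$ is contained in $n-d$ many $d$-faces, and only $O(s)$ of them can also contain a second face of $S$ (roughly $s\cdot d$ of them), so once $s\le \epsilon n^{d-1}$ we lose nothing, but since $s$ can be as large as $\binom n d$ I instead pass to a ``link'' argument: restrict attention to a random subset of vertices or to the bipartite-like structure to guarantee $\Omega(n)$ violated faces per face in a positive-density subset of $S$. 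Either way the output is: the number of violated $d$-faces is at least $\Omega(s\cdot n) = \Omega(n^{d})$.

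The probability that all those $\Omega(n^d)$ faces are absent is at most $(1-p)^{\Omega(n^d)} \le \exp(-\Omega(p n^d)) = \exp(-\Omega(d\log n\cdot n^{d-1})) = n^{-\Omega(dn^{d-1})}$, using $p\ge 40d\log n/n$. Against this I must count candidate cochains: the number of normalized cochains with support of size $s$ is at most $\binom{\binom n d}{s} q^{s} \le (n^d)^{s} q^s$. The factor $q^s$ is the new enemy relative to the field-of-fixed-characteristic case, and this is the step I expect to be the main obstacle: $q$ is a priori unbounded, so $q^s$ could be enormous. The resolution is that we do not need to know $\phi$'s \emph{values}, only its \emph{support pattern} together with the constraints ``$\delta\phi(\sigma)=0$ for $\sigma\in Y$'' — equivalently, I bound the number of distinct \emph{supports} $S$ (which is only $(n^d)^{s}$, no $q$) and argue that the cocycle condition on the $\Omega(s n)$ violated faces forces those faces out of $Y$ \emph{no matter what nonzero values $\phi$ takes on $S'$}; since the violated faces were chosen to contain a unique nonzero $(d-1)$-subface, absence of the face is forced by support alone. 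Hence the $q^s$ factor never enters the union bound. Putting the pieces together: $\pr(H_{d-1}\neq 0)\le \sum_{s\ge cn^{d-1}} (n^d)^s \exp(-\Omega(s n)) \le \sum_{s\ge cn^{d-1}} \exp\bigl(s(d\log n - \Omega(n))\bigr)$, which for $n$ large is dominated by its first term and is at most $\exp(-\Omega(n^{d})) \ll n^{-(d+1)}$, with room to spare; the constant $40d$ is chosen generously so that the $d\log n$ from counting supports is swamped. The only genuinely delicate point is guaranteeing the $\Omega(sn)$ violated faces with a \emph{uniquely-nonzero} subface — i.e.\ making the combinatorial cocycle-expansion estimate robust to the multiplicities created by large $s$ — and that is where I would spend the bulk of the effort, likely by proving it first for $\phi$ with $\mathrm{supp}(\phi)$ minimized within its coboundary coset and then showing minimality forces a linear-in-$n$ ``out-degree'' into violated faces.
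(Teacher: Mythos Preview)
Your approach is fundamentally different from the paper's, and the step you yourself flag as ``the main obstacle'' is a genuine gap.

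The paper does not count cocycles at all. It introduces the \emph{$q$-reducing set} of $Y$, the set of $d$-faces $f$ whose addition strictly lowers $\dim H_{d-1}(Y;\Z/q\Z)$, and notes that $H_{d-1}=0$ iff this set is empty and that it shrinks monotonically as faces are added. Since $\dim H_{d-1}$ can drop at most $\binom{n}{d}$ times in total, a short stochastic-domination argument shows that after $\tilde m\le 4\binom{n}{d}$ random faces the expected $q$-reducing set is at most $\tfrac12\binom{n}{d+1}$, hence by symmetry each fixed face lies in it with probability $\le\tfrac12$. One then ``sprinkles'': with $k\approx 2(d+1)\log_2 n$ independent copies of $Y_d(n,\tilde m)$, the $q$-reducing set of the union is contained in the intersection of the individual ones, so each face survives with probability $\le 2^{-k}=n^{-2(d+1)}$, and a union bound over all $\binom{n}{d+1}$ faces gives the result. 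No estimate anywhere depends on $q$, so uniformity is automatic.

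Your argument, by contrast, is exactly the cocycle-counting route that the authors say in the introduction ``does not seem possible to extend \dots\ to cover the case when $q$ is growing much faster than polynomial in $n$,'' and the specific claim it rests on is false. Take $d=2$, $q>n$, and let $\phi$ assign pairwise distinct nonzero values to the $n-1$ edges through a fixed vertex $v_0$ and $0$ to all other edges. Then $\phi$ is not a coboundary, and (since the only coboundaries supported on $\mathrm{star}(v_0)$ are constant on it) a minimal-support representative has support $S=\{v_0u:u\neq u^*\}$ for some $u^*$, of size $s=n-2$. Every triangle through $v_0$ meets $S$ in either two edges or (for the $n-2$ triangles $\{v_0,u^*,w\}$) one edge; triangles avoiding $v_0$ meet $S$ not at all. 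Hence the number of $d$-faces with a \emph{unique} subface in $S$ is $n-2=s$, not $\Omega(sn)$. Your proposed union bound $\sum_s (n^d)^s\exp(-\Omega(psn))$ therefore does not follow; and if instead you sum over cosets rather than supports, there are on the order of $q^{\,n-2}$ cosets with this same minimal support, so the $q$-factor you hoped to avoid reappears. (The separate assertion that the minimal support must have size $\ge cn^{d-1}$ is also incorrect: $\phi=\mathbf{1}_\tau$ for a single $(d-1)$-face $\tau$ is already a non-coboundary with $s=1$.) The paper's sprinkling argument via the $q$-reducing set is precisely the device that sidesteps this obstruction.
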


\begin{remark}
Theorem \ref{thm:main2} is similar to the main result in Meshulam--Wallach, but the statement and proof differ in fundamental ways. The main point is that the bound on the probability that $H_{d-1}(Y; \Z/q\Z) \neq 0$ holds uniformly over all primes $q$, even if $q$ is growing very quickly compared to the number of vertices $n$.
\end{remark}

\section{Proof}


We first prove Theorem \ref{thm:main1}. The proof relies on 
Theorem \ref{thm:main2} plus one additional fact --- a bound on the size of the torsion subgroup in the degree $(d-1)$ homology of a simplicial complex, which only depends on the number of vertices $n$.  Let $A_{T}$ denote the torsion subgroup of an abelian group $A.$
\begin{lemma} \label{dino bath}
Let $d \ge 2$ and suppose that $X$ is a $d$-dimensional simplicial complex on $n$ vertices. 
Then $|\left(H_{d-1}(X;\Z)\right)_{T}| = \exp\left ( O(n^d) \right) $. 
\end{lemma}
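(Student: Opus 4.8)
The plan is to bound the torsion subgroup via the structure of the boundary map over $\Z$. Let $\partial_d \colon C_d(X;\Z) \to C_{d-1}(X;\Z)$ be the $d$-th simplicial boundary map, represented (after choosing orientations) by an integer matrix $M$ whose rows are indexed by $(d-1)$-faces and columns by $d$-faces, with entries in $\{-1,0,1\}$. The key structural fact is that $(H_{d-1}(X;\Z))_T$ is determined by the Smith normal form of $\partial_d$: if the nonzero invariant factors are $s_1 \mid s_2 \mid \cdots \mid s_r$, then the torsion subgroup of $H_{d-1} = \ker \partial_{d-1} / \operatorname{im}\partial_d$ is $\bigoplus_{i} \Z/s_i\Z$ restricted to those $s_i > 1$, so $|(H_{d-1}(X;\Z))_T| = \prod_{i : s_i > 1} s_i$. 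Since the product of all invariant factors equals the gcd of the $r \times r$ minors of $M$ — in particular it divides the absolute value of any single nonzero $r \times r$ minor — it suffices to bound one such minor, equivalently the absolute value of the determinant of a maximal-rank square submatrix of $M$.

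The bound then comes from Hadamard's inequality. The number of $d$-faces is at most $\binom{n}{d+1} = O(n^{d+1})$ and the number of $(d-1)$-faces is at most $\binom{n}{d} = O(n^d)$, so $r \le O(n^d)$. Any $r \times r$ submatrix $M'$ of $M$ has all entries in $\{-1,0,1\}$, and each column of $M'$ has at most $d+1$ nonzero entries (a $d$-simplex has $d+1$ faces of dimension $d-1$), so each column has Euclidean norm at most $\sqrt{d+1}$. Hadamard's inequality gives $|\det M'| \le \prod_{j=1}^{r} \|M'_{\cdot,j}\|_2 \le (d+1)^{r/2} = \exp\bigl(\tfrac{r}{2}\log(d+1)\bigr) = \exp(O(n^d))$, using that $d$ is fixed. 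Hence $|(H_{d-1}(X;\Z))_T|$ divides a positive integer of size $\exp(O(n^d))$, which proves the claim.

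I anticipate no serious obstacle; the only point requiring a little care is the algebraic identification of the torsion subgroup of $H_{d-1}$ with the part of the Smith normal form cokernel coming from invariant factors exceeding $1$, and the observation that the full product of invariant factors divides any nonvanishing maximal minor (rather than trying to compute the gcd of all maximal minors exactly). One should also note $\ker\partial_{d-1}$ is a direct summand (being the kernel of a map to a free abelian group, hence a pure subgroup of finite rank, hence free), so restricting the Smith normal form computation of $\partial_d$ to the sublattice $\ker\partial_{d-1}$ does not change the torsion in the quotient; alternatively one avoids this subtlety entirely by noting $(H_{d-1})_T$ is a quotient of $\operatorname{coker}\partial_d$ and its order therefore still divides $\prod_i s_i$, which divides the chosen minor. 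Since we only need an upper bound of the stated crude form, the looser argument suffices.
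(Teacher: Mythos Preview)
Your proof is correct and follows essentially the same route as the paper: bound $|(H_{d-1})_T|$ by the product of the invariant factors of $\partial_d$, bound that product by a single nonzero maximal minor, and control the minor via Hadamard's inequality using that each column has Euclidean norm at most $\sqrt{d+1}$. One small note: in your alternative clause you call $(H_{d-1})_T$ a \emph{quotient} of $\operatorname{coker}\partial_d$, but it is a \emph{subgroup} (as the paper also observes); either way its order divides $\prod_i s_i$, and your main argument via the direct-summand observation is in fact slightly cleaner than the paper's, since bounding $r$ by the number of $(d-1)$-faces avoids the paper's preliminary reduction to the case $H_d(X)=0$.
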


\begin{proof}[Proof of Lemma \ref{dino bath}]
We include a proof here for the sake of completeness, but such bounds on the order of torsion groups are known. See, for example, Proposition 3 in Soul\'{e} \cite{Soule99}, which he attributes in turn to Gabber.

We assume without loss of generality that $H_{d}(X) =0$. Indeed, if there is a nontrivial cycle $Z$ in $H_{d}(X)$, then delete one face $\sigma$ from the support of $Z$. Then in the subcomplex $X - \sigma$, the rank of $H_{d}(X - \sigma)$ is one less than the rank of $H_{d}(X)$. So we have
$$\dim [ H_{d-1} (X - \sigma, {\bf k}) ] = \dim [ H_{d-1} (X , {\bf k} )]$$ over every field $k$, and then the isomorphism
$H_{d-1} (X - \sigma, \Z)  = H_{d-1} (X , \Z )$ follows by the universal coefficient theorem. 

We may further assume that the number of $d$-dimensional faces $f_d$ is bounded by $f_d \le {n \choose d}$, since if there were more faces than this, then we would have $f_d > f_{d-1}$ and there would have to be nontrivial homology in degree $d$, by dimensional considerations.

Let $C_i$ denote the space of chains in degree $i$, i.e.\ all formal $\Z$-linear combinations of $i$-dimensional faces, and let $\delta_{i}: C_i \to C_{i-1}$ be the boundary map in simplicial homology. If $Z_i$ is the kernel of $\delta_{i}$ and $B_i$ is the image of $\delta_{i+1}$, then by definition $H_i (X; \Z) = Z_i / B_i$.

Let $M_i$ be a matrix for the boundary map $\delta_i$, with respect to the preferred bases of faces in the simplicial complex. Then the order of the torsion subgroup $|(C_i / B_i)_T|$ is bounded by the product of the lengths of the columns of $M_i$, as follows.


We begin by writing $M_i$ in its Smith normal form, i.e. $M_i = P D Q$ with $P$ and $Q$ invertible matrices over $\Z$ and $D$ a rectangular matrix with entries only on its diagonal.  Let $r$ be the rank of $D$ over $\Q;$ note this is also the $\Q$-rank of $M_i.$  By removing the all $0$ rows and columns from $D$ (and some columns of $P$ and some rows of $Q$), we may write $M_i = P' D' Q'$ where $D'$ is an $r \times r$ diagonal matrix, and all of $P',D',\text{and } Q'$ have $\Q$-rank $r.$  By the definition of $D,$ we have $\det D' = |(C_i / B_i)_T|.$

As $P'$ and $Q'$ both have $\Q$-rank $r,$ we can find a collection of $r$ rows from $P'$ that are linearly independent over $\Q$ and $r$ columns of $Q'$ that are linearly independent over $\Q.$  Write $\tilde P$ and $\tilde Q$ for the $r \times r$ submatrices of $P'$ and $Q'$ given by these rows and columns.  As $\tilde P$ and $\tilde Q$ are full $\Q$-rank, they are invertible over $\Q$ and have nonzero determinant.  As they are additionally integer matrices, they each have determinants at least $1.$ Thus,
\[
 \det(D') 
 \leq | \det( \tilde P) \det(D') \det( \tilde Q)| 
 = | \det( \tilde PD'\tilde Q)|.
\]

On the other hand $\tilde M = \tilde P D' \tilde Q$ is an $r \times r$ submatrix of $M_i.$  Thus, applying the Hadamard bound to $\tilde M$, we may bound $\det(\tilde M)$ by the product of the lengths of the columns of $\tilde M.$  As the columns of $M_i$ all have lengths at least $1,$ the product of the lengths of the columns of $\tilde M$ are at most the product of the lengths of the columns of $M_i,$ completing the proof.

Since $Z_i / B_i$ is isomorphic to a subgroup of $C_i / B_i$, this also gives a bound on the torsion in homology. In particular, for any simplicial complex $X$ on $n$ vertices, we have that
\begin{align*}
|\left(H_{d-1}(X;\Z)\right)_{T}| & \le \sqrt{d+1}^{{ n \choose d  }} \\
& = \exp\left ( O(n^d) \right).
\end{align*}




\end{proof}

Now define
$$Q(X)=\{q~\text{prime}:\ H_{d-1}(X;\Z/q\Z) \neq 0\}.$$ 
An immediate consequence of Lemma \ref{dino bath} is that $$|Q(X)| =O( n^d),$$
and this is the fact which we will use.

\medskip

\begin{pfofthm}{\ref{thm:main1}}
Our strategy is as follows. 
Let $Y_1,Y_2 \sim Y_d(n,40d \log n/n)$ 
be two independent random $d$-complexes 
and let $Y \sim Y_d(n,80d \log n/n)$
\begin{enumerate}
\item[\bf{Step 1}] \label{duwamish} First we note that we can couple $Y$, $Y_1$ and $Y_2$ such that 
\begin{equation} F_d(Y_1) \cup F_d(Y_2)\subset F_d(Y) .  \label{cauliflower} \end{equation}
By (\ref{cauliflower}) if $H_{d-1}(Y_1;\Z/q\Z)=0$ or $H_{d-1}(Y_2;\Z/q\Z)=0$ then 
$H_{d-1}(Y;\Z/q\Z)=0$.

\item[\bf{Step 2}] 
By Lemma \ref{dino bath}, $Q(Y_1)$ has cardinality $O(n^d)$.
\item[\bf{Step 3}] Applying a union bound, the probability that either $H_{d-1}(Y_1;\Q) \neq 0$ or there exists $q \in Q(Y_1)$ such that 
$$H_{d-1}(Y_2;\Z/q\Z) \neq 0$$
is at most $O(n^d\cdot n^{-(d+1)})=O(1/n)=o(1)$.
\item[\bf{Step 4}] Thus if 
\begin{enumerate}
  \item $H_{d-1}(Y_1;\Q) =0$, and
  \item $H_{d-1}(Y_2;\Z/q\Z) = 0$ for all 
$q \in Q(Y_1)$,
\end{enumerate}
then by the coupling in Step 1, we have that $H_{d-1}(Y;\Z/q\Z)=0$ for all primes $q$. By the universal coefficient theorem we have that $H_{d-1}(Y;\Z)=0$. Each of these two conditions happens with probability  $1-o(1)$ which completes the proof.
\end{enumerate}
\end{pfofthm}

Now we begin our proof of Theorem \ref{thm:main2}. Throughout this paper we are always working with $d$-dimensional simplicial complexes on vertex set $[n]$, with complete $(d-1)$-skeleton. Such a complex $Y$ is defined by $F_d(Y)$, its set of $d$-dimensional faces. We often associate the two in the following way. If $f\in {[n] \choose d+1}$ (i.e.\ $f$ is a $d$-dimensional simplex) and $Y$ is as above then we write $Y \cup f$ for the simplicial complex with 
$F_d(Y \cup f)=F_d(Y)\cup f.$
 
\bigskip
 
Let $q$ be a prime and $Y$ be as above. Define
$$\qrs(Y)=\left \{f :\ H_{d-1}(Y\cup f; \Z/q\Z) \neq H_{d-1}(Y; \Z/q\Z)\right \}.$$

In other words, $\qrs(f)$ is precisely the set of $d$-dimensional faces which, when added to $Y$, drop the dimension of $H_{d-1} (Y; \Z/ q \Z)$ by one.

\begin{lemma} \label{shabby}
A $d$-dimensional simplex $f \in \qrs(Y)$ if and only if the boundary of $f$ is not a $(\Z/q\Z)$ boundary in $Y$. Thus if $Y \subset Y'$, where $Y$ and $Y'$ are $d$-dimensional complexes sharing the same $d-1$-skeleton,
then $$\qrs(Y') \subset \qrs(Y).$$
\end{lemma}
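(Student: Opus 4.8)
The plan is to unwind the definitions and reduce the statement to a linear-algebra fact about when adding a face changes the homology. First I would make the trivial but necessary observation that adding a single $d$-face $f$ to $Y$ does not change the chain groups in any degree except $C_d$, where it adds one generator, and does not change $Z_{d-1}$ or the boundary maps into degree $d-2$; the only thing that can change is $B_{d-1}$, the image of $\partial_d$. Concretely, $B_{d-1}(Y\cup f)$ is the subgroup of $C_{d-1}(Y;\Z/q\Z)$ generated by $B_{d-1}(Y)$ together with the single element $\partial f$. Hence $H_{d-1}(Y\cup f;\Z/q\Z)$ differs from $H_{d-1}(Y;\Z/q\Z)$ precisely when $\partial f \notin B_{d-1}(Y)$, and in that case, since we are working over the field $\Z/q\Z$, enlarging $B_{d-1}$ by one element either fixes it (if $\partial f$ is already in it) or increases its dimension by exactly one, which drops $\dim H_{d-1}$ by one. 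This gives the ``if and only if'' in the first sentence, and also justifies the parenthetical remark after the definition of $\qrs(Y)$ that membership corresponds to dropping the dimension by one.

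Next I would derive the monotonicity statement. Suppose $Y\subset Y'$ with the same $(d-1)$-skeleton, so $F_d(Y)\subset F_d(Y')$ and therefore $B_{d-1}(Y)\subset B_{d-1}(Y')$ as subspaces of the common space $C_{d-1}(\Z/q\Z)$. If $f\in\qrs(Y')$, then by the characterization just proved, $\partial f\notin B_{d-1}(Y')$; since $B_{d-1}(Y)\subset B_{d-1}(Y')$, a fortiori $\partial f\notin B_{d-1}(Y)$, so $f\in\qrs(Y)$. That is exactly $\qrs(Y')\subset\qrs(Y)$. One small caveat to handle cleanly: $f$ itself should not already be a face of $Y'$ (otherwise $Y'\cup f = Y'$ and the condition is vacuous/false); I would either restrict attention to $f\notin F_d(Y')$ throughout, or note that if $f\in F_d(Y')$ then $\partial f\in B_{d-1}(Y')$ automatically so $f\notin\qrs(Y')$ and the inclusion is trivially satisfied for that $f$.

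I do not expect a serious obstacle here; this lemma is essentially a bookkeeping exercise once one recognizes that over a field the only degree of freedom is the codimension of $B_{d-1}$ inside $Z_{d-1}$, and adding one face perturbs $B_{d-1}$ by at most one dimension. The only place requiring a moment's care is making sure the cycle group $Z_{d-1}$ and all lower boundary maps genuinely do not see the new face — this is immediate because $f$ is top-dimensional, so $\partial$ of a $(d-1)$-chain is unaffected, and because the $(d-1)$-skeleton is held fixed. If anything is ``hard,'' it is only the temptation to overcomplicate: one must resist invoking exact sequences or universal coefficients and instead just compare the two quotient spaces $Z_{d-1}/B_{d-1}(Y)$ and $Z_{d-1}/B_{d-1}(Y\cup f)$ directly.
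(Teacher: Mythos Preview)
Your argument is correct and is essentially the same as the paper's, only more detailed: the paper's proof consists of the two sentences asserting that $H_{d-1}$ changes iff $\partial f$ is not already a boundary, leaving the linear-algebra unwinding and the monotonicity deduction implicit. You have simply filled in those details.
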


\begin{proof}
If $\partial f$ is not a boundary in $Y$ then 
$H_{d-1}(Y;Z/q\Z) \neq H_{d-1}(Y \cup f;Z/q\Z)$. If $\partial f$ is a boundary in $Y$ then 
$H_{d-1}(Y;Z/q\Z) = H_{d-1}(Y \cup f;Z/q\Z)$.
 \end{proof}

\begin{lemma} \label{djimi}
$H_{d-1}(Y; \Z/q\Z) =0$ if and only if $\qrs(Y)=\emptyset.$
\end{lemma}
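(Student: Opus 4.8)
The plan is to deduce Lemma~\ref{djimi} directly from Lemma~\ref{shabby}, using one elementary fact about the ambient complex: the full $d$-skeleton of the simplex on vertex set $[n]$ has vanishing $(d-1)$-dimensional homology with $\Z/q\Z$ coefficients, since that skeleton is $(d-1)$-connected and $d-1\ge 1$. Throughout, fix $d\ge 2$ and write $Z_{d-1}$ and $B_{d-1}(Y)$ for the groups of $(d-1)$-cycles and $(d-1)$-boundaries of $Y$ with $\Z/q\Z$ coefficients. Since $Y$ has complete $(d-1)$-skeleton, $Z_{d-1}$ depends only on $n$ and $d$ (not on $F_d(Y)$), the same group serves as the cycle group for $Y$ and for the full $d$-skeleton, and $H_{d-1}(Y;\Z/q\Z)=Z_{d-1}/B_{d-1}(Y)$.

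For the implication $H_{d-1}(Y;\Z/q\Z)=0 \Rightarrow \qrs(Y)=\emptyset$: if the homology vanishes then $B_{d-1}(Y)=Z_{d-1}$; for every $d$-simplex $f$ the chain $\partial f$ lies in $Z_{d-1}$ because boundaries are cycles, hence $\partial f\in B_{d-1}(Y)$, so $\partial f$ is a $\Z/q\Z$-boundary in $Y$, and Lemma~\ref{shabby} gives $f\notin\qrs(Y)$. For the converse, suppose $\qrs(Y)=\emptyset$. By Lemma~\ref{shabby}, $\partial f\in B_{d-1}(Y)$ for every $f\in\binom{[n]}{d+1}$ (trivially so when $f\in F_d(Y)$). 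Since $B_{d-1}(Y)$ is a $\Z/q\Z$-submodule of $C_{d-1}$ containing $\partial f$ for all such $f$, it contains their $\Z/q\Z$-span, which is precisely the image of the boundary map from the full $d$-skeleton; by the acyclicity fact above that image is all of $Z_{d-1}$. Hence $B_{d-1}(Y)=Z_{d-1}$, i.e.\ $H_{d-1}(Y;\Z/q\Z)=0$.

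There is no serious obstacle here; the content is carried entirely by Lemma~\ref{shabby} together with the $(d-1)$-acyclicity of the simplex. The only points needing care are that $(d-1)$-cycles are computed against the complete $(d-1)$-skeleton and therefore coincide for $Y$ and for the full $d$-skeleton, and that the acyclicity is invoked with $\Z/q\Z$ coefficients (valid for a contractible complex over any ring). If one wished to avoid naming the ambient simplex at all, an equivalent route is to fix an arbitrary $z\in Z_{d-1}$, write it as a $\Z/q\Z$-combination $z=\sum_f a_f\,\partial f$ using that the full skeleton is $(d-1)$-acyclic, and then, under the hypothesis $\qrs(Y)=\emptyset$, replace each $\partial f$ by an element of $B_{d-1}(Y)$ to conclude $z\in B_{d-1}(Y)$.
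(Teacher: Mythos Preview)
Your proof is correct and takes essentially the same approach as the paper's: both directions rest on Lemma~\ref{shabby} together with the fact that the complete $d$-skeleton on $[n]$ has vanishing $(d-1)$-homology. The paper phrases the backward implication iteratively (adding all missing faces one at a time leaves homology unchanged, and the terminal complex is acyclic), whereas you phrase it in one step via the boundary group, but the content is identical.
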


\begin{proof}
Clearly, $H_{d-1}(*,\Z/q\Z)=0$  is monotone with respect to inclusion of $d$-faces, so $H_{d-1}(Y; \Z/q\Z) =0$ implies that $\qrs(Y)=\emptyset.$

But we also have that the $d-1$-skeleton of $Y$ is complete, so once all possible $d$-faces have been added, homology is vanishing. Once again applying the monotonicity of Lemma \ref{shabby} , $\qrs(Y) = \emptyset$ also implies that $H_{d-1}(Y; \Z/ q \Z) = 0$.

\end{proof}

Instead of working directly with the Linial--Meshulam distribution $Y_d(n,p)$ where each face is included independently with probability $p$, it is convenient to work with the closely related distribution $Y_d(n,m)$, where the complex is chosen uniformly over all 
$${ {n \choose d+1} \choose m}$$
simplicial complexes on $n$ vertices with complete $d-1$-skeleton, and with exactly $m$ faces of dimension $d$. As with the random graphs we have that if $m \approx p{n \choose d+1}$ then for many properties the two models are very similar. After doing our analysis with $Y_d(n,m)$, we convert our results back to the case of $Y_d(n,p)$.

Let 
$$ \tilde m =  \tilde m(n,q)= \min\left\{m':\ \E\big| \qrs(Y(n,m'))\big| \leq 
  \frac{1}{2}\binom{n}{d+1} \right\}
  $$
This next lemma points out an easy consequence of our definition of $\tilde m$.
\begin{lemma} \label{lamar}
For every $d$-face $f$
$$\pr \left( f \in \qrs \left( Y(n,\tilde m) \right) \right)\leq 1/2.$$
\end{lemma}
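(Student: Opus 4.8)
The plan is to exploit the definition of $\tilde m$ as a minimum, combined with the symmetry of the model $Y(n,m)$ over $d$-faces. By the defining property of $\tilde m$, we have $\E|\qrs(Y(n,\tilde m))| \le \tfrac12\binom{n}{d+1}$. On the other hand, the distribution $Y(n,\tilde m)$ is invariant under any permutation of the vertex set $[n]$, hence under the induced action on $d$-faces, and this action is transitive on $\binom{[n]}{d+1}$. Therefore the probability $\pr(f \in \qrs(Y(n,\tilde m)))$ does not depend on the choice of $f$; call this common value $\pi$.

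First I would write the expectation of the cardinality as a sum of indicators over all $d$-faces:
\[
\E\big|\qrs(Y(n,\tilde m))\big| \;=\; \sum_{f \in \binom{[n]}{d+1}} \pr\big(f \in \qrs(Y(n,\tilde m))\big) \;=\; \binom{n}{d+1}\,\pi.
\]
Combining this identity with the bound $\E|\qrs(Y(n,\tilde m))| \le \tfrac12\binom{n}{d+1}$ coming from the definition of $\tilde m$ immediately yields $\binom{n}{d+1}\pi \le \tfrac12\binom{n}{d+1}$, and dividing by $\binom{n}{d+1}$ gives $\pi \le 1/2$, which is exactly the claim.

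There is essentially no obstacle here; this lemma is a direct unpacking of the definition. The only point worth a sentence of care is the symmetry argument: one should note that $Q$-reducing-ness of a face $f$ for a complex $Y$ is preserved when both $Y$ and $f$ are relabeled by a permutation $\sigma \in S_n$ (since $\sigma$ induces a simplicial isomorphism, which preserves homology with any coefficients), so that $\pr(f \in \qrs(Y(n,\tilde m)))$ is indeed independent of $f$. Everything else is just the linearity-of-expectation computation above.
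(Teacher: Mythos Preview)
Your proof is correct and is exactly the symmetry argument the paper has in mind; the paper's own proof is simply the one line ``This follows easily by symmetry.'' You have just spelled out the linearity-of-expectation computation and the $S_n$-invariance that underlie that sentence.
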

\begin{proof}
This follows easily by symmetry.
\end{proof}

If $Z$ and $Z'$ are random $d$-complexes with vertex set $[n]$ and the complete $(d-1)$-skeleton
then we say $Z$ {\it stochastically dominates} $Z'$ if there exists a coupling of the two random variables with $\pr\bigg(F_d(Z') \subset F_d(Z)\bigg)=1$.

\begin{lemma} \label{whitecaps}
Let $m=\sum_{i=1}^k m_i$ with $m_i \in \N$.
Also let $Y \sim Y_d(n,m)$ and $Y^i \sim Y_d(n,m_i)$ for all $i$.
Then  $Y$ stochastically dominates $\bigcup_{i=1}^{k}Y^i$ and
$$\qrs(Y) \subset \qrs\left( \bigcup_{i=1}^{k}Y^i \right).$$
\end{lemma}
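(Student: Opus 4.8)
The plan is to exhibit an explicit coupling under which $F_d\!\left(\bigcup_{i=1}^k Y^i\right) \subseteq F_d(Y)$ holds almost surely; once that is in place, both conclusions of the lemma are immediate. Stochastic domination is then exactly the definition, and the containment $\qrs(Y) \subseteq \qrs\!\left(\bigcup_{i=1}^k Y^i\right)$ follows by applying the monotonicity statement of Lemma \ref{shabby} pointwise on the coupling (all the complexes in sight have the complete $(d-1)$-skeleton, so the hypothesis of that lemma is met). So the entire content is the construction of the coupling, and I will take the $Y^i$ to be mutually independent, which is the case relevant for the applications.

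The starting observation is purely combinatorial: however the $Y^i$ are realized, $\left|F_d\!\left(\bigcup_{i=1}^k Y^i\right)\right| = \left|\bigcup_{i=1}^k F_d(Y^i)\right| \le \sum_{i=1}^k m_i = m$, so $\bigcup_{i=1}^k Y^i$ is a random $d$-complex with complete $(d-1)$-skeleton and \emph{at most} $m$ top faces. The next point, and the place where independence is used, is that the law of $F_d\!\left(\bigcup_{i=1}^k Y^i\right)$ is invariant under the full symmetric group acting on the set $\binom{[n]}{d+1}$ of all possible $d$-faces: each $F_d(Y^i)$ is uniform over the $m_i$-subsets, and the $Y^i$ are independent. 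Since an invariant probability distribution on subsets of a finite set is a mixture of the uniform distributions on subsets of each fixed cardinality, it follows that, writing $U = \left|F_d\!\left(\bigcup_{i=1}^k Y^i\right)\right| \le m$, conditionally on $\{U = u\}$ the complex $\bigcup_{i=1}^k Y^i$ is distributed exactly as $Y_d(n,u)$.

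It therefore suffices to couple $Y_d(n,u)$ inside $Y = Y_d(n,m)$ for each fixed $u \le m$ and then mix over the law of $U$. The fixed-size coupling is the standard one: sample $Y \sim Y_d(n,m)$ and delete $m-u$ of its $d$-faces chosen uniformly at random; by symmetry the result is distributed as $Y_d(n,u)$, and its face set is a subset of $F_d(Y)$. Performing this with the random value $u = U$ (drawn from its own law, using fresh randomness for the choice of faces to delete) yields a pair whose first coordinate has the law of $\bigcup_{i=1}^k Y^i$, whose second coordinate still has the law $Y_d(n,m)$, and with $F_d\!\left(\bigcup_{i=1}^k Y^i\right) \subseteq F_d(Y)$ almost surely.

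I do not anticipate a real obstacle; the one step that needs care is the middle one, since one cannot simply assert that $\bigcup_{i=1}^k Y^i$ ``has $m$ faces'' — it typically has fewer and is not literally a $Y_d(n,m')$ for any fixed $m'$ — so the conditioning-on-the-number-of-faces-plus-symmetry argument is the small idea that lets the coupling go through. One could instead build all the $Y^i$ simultaneously from a single uniformly random ordering of $\binom{[n]}{d+1}$, taking $Y^i$ to consist of a prescribed block of $m_i$ consecutive faces; this makes the containment transparent but imposes a particular joint law on the $Y^i$, whereas the argument above leaves them independent.
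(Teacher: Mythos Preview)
Your proposal is correct and follows the same two-step structure as the paper: first establish the stochastic domination by an explicit coupling, then deduce the $\qrs$ containment from Lemma~\ref{shabby}. The paper merely calls the first step ``a standard argument'' and cites Section~1.1 of \cite{JLR}, whereas you supply the details (symmetry of the law of $\bigcup_i Y^i$, condition on the cardinality $U$, then couple $Y_d(n,u)\subseteq Y_d(n,m)$ by deletion); your discussion of the alternative ordering-based coupling and why the independence-preserving version is the one actually needed in Lemma~\ref{traore} is a useful clarification that the paper leaves implicit.
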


\begin{proof}
The first claim is a standard argument; see for example Section 1.1 of \cite{JLR}. The second follows from the first and the monotonicity of the $q$-reducing set (Lemma \ref{shabby}).
\end{proof}

\begin{lemma} \label{traore}
For any $q$, sufficiently large $n$, $d$-face $f$ and $k\geq 2(d+1) \log_2(n)$, then
for $Y \sim Y_d(n,k \tilde m)$
$$\pr\bigg(f \in \qrs(Y)\bigg)\leq \frac{1}{n^{2(d+1)}}.$$
\end{lemma}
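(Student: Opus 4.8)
The plan is to combine the stochastic domination of Lemma~\ref{whitecaps} with the independence of the pieces and the single-face bound of Lemma~\ref{lamar}. Write $k\tilde m = \sum_{i=1}^{k}\tilde m$ and let $Y^1,\dots,Y^k \sim Y_d(n,\tilde m)$ be independent. By Lemma~\ref{whitecaps}, $Y \sim Y_d(n,k\tilde m)$ stochastically dominates $\bigcup_{i=1}^{k}Y^i$ and
$$\qrs(Y) \subset \qrs\left( \bigcup_{i=1}^{k} Y^i \right),$$
so it is enough to bound $\pr\big( f \in \qrs(\bigcup_{i=1}^{k} Y^i) \big)$. (For $n$ large, $k\tilde m \le \binom{n}{d+1}$, so $Y_d(n,k\tilde m)$ is defined; were $k\tilde m$ to exceed $\binom{n}{d+1}$ then $Y$ would be the complete $d$-complex and $\qrs(Y)=\emptyset$ by Lemma~\ref{djimi}, and there would be nothing to prove.)

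The key step is a monotonicity observation, used in the direction opposite to the ``obvious'' one. Each $Y^i$ is a subcomplex of $\bigcup_{j=1}^{k} Y^j$ with the same complete $(d-1)$-skeleton, so by Lemma~\ref{shabby} we have $\qrs(\bigcup_{j=1}^{k} Y^j) \subset \qrs(Y^i)$ for every $i$; equivalently, if $\partial f$ is a $(\Z/q\Z)$-boundary in even one of the $Y^i$, then it is a boundary in the union and $f \notin \qrs(\bigcup_j Y^j)$. Hence
$$\big\{\, f \in \qrs(\textstyle\bigcup_{j=1}^{k} Y^j) \,\big\} \;\subset\; \bigcap_{i=1}^{k} \big\{\, f \in \qrs(Y^i) \,\big\}.$$
Since the $Y^i$ are independent, so are the events on the right-hand side, and Lemma~\ref{lamar} bounds each of their probabilities by $1/2$. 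Therefore
$$\pr\big( f \in \qrs(Y) \big) \;\le\; \prod_{i=1}^{k} \pr\big( f \in \qrs(Y^i) \big) \;\le\; 2^{-k} \;\le\; 2^{-2(d+1)\log_2 n} \;=\; n^{-2(d+1)},$$
where the last inequality uses $k \ge 2(d+1)\log_2(n)$.

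I do not expect a genuine obstacle here: the substantive work has already been packaged into the definition of $\tilde m$ (which makes Lemma~\ref{lamar} available) and into the stochastic-domination lemma. The one place to be careful is the direction of the monotonicity of $\qrs$ --- replacing $Y_d(n,k\tilde m)$ by the union of $k$ independent copies of $Y_d(n,\tilde m)$ must only make it \emph{harder} for a fixed face $f$ to be $q$-reducing, which is exactly what converts a per-copy probability of $1/2$ into the product $2^{-k}$. The rest is bookkeeping, together with the harmless remark that using $k$ batches of $\tilde m$ faces stays within range once $n$ is large.
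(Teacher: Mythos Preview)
Your proof is correct and follows essentially the same route as the paper: couple $Y_d(n,k\tilde m)$ with a union of $k$ independent copies of $Y_d(n,\tilde m)$ via Lemma~\ref{whitecaps}, use the monotonicity of $\qrs$ (Lemma~\ref{shabby}) to pass from the union to the intersection of the per-copy events, and then apply independence together with Lemma~\ref{lamar} to get $2^{-k}\le n^{-2(d+1)}$. Your parenthetical checking that $k\tilde m\le\binom{n}{d+1}$ for large $n$ is a welcome bit of extra care that the paper leaves implicit.
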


\begin{proof}
Let $Y^1, \dots , Y^k$ be i.i.d.\ complexes with distribution $Y_d(n,\tilde m)$.
Then by Lemma \ref{whitecaps}
we can find a coupling so that a.s.\ 
$$\qrs\big(Y\big) \subset \qrs\left(\bigcup_{i=1}^{k}Y^i\right). $$
Then by Lemmas \ref{shabby}, \ref{djimi} and  \ref{lamar}
\begin{eqnarray*}
\pr\bigg(f \in \qrs(Y) \bigg)
& \leq & \pr\left( f \in \qrs\left( \bigcup_{1}^{k}Y^i \right) \right) \\
& \leq & \pr\left(\bigcap_1^k \left\{f \in \qrs\left( Y^i\right)\right\} \right) \\
& \leq & \prod_1^k \pr \left( f \in \qrs\left( Y^i\right)\right)  \\
& \leq & \left( \frac12 \right)^k\\
& \leq & \frac{1}{n^{2(d+1)}}.
\end{eqnarray*}
\end{proof}

Now the main task that remains is to estimate $\tilde m$. Before we do so, we give a heuristic that indicates that
$\tilde m \leq 2{n \choose d}$. We consider the process where we start with $Y_0$ the complex with the complete $(d-1)$-skeleton and no $d$-dimensional faces. Then we inductively generate $Y_{i+1}$ by taking $Y_i$ and independently adding one new $d$-dimensional face.  Note that when we are adding faces one at a time, the dimension $\dim H_{d-1}(Y_i, \Z / q \Z)$ is monotone decreasing.
 
As $H_{d-1}(Y_0;\Z/q\Z)$ is generated by the $(d-1)$-cycles its dimension is at most ${n \choose d}$. 
Heuristically this indicates that $\tilde m$ should be no larger than $2{n \choose d}$, because 
if we were to add $2{n \choose d}$ faces and half of them reduce the dimension of the 
homology, then the dimension has dropped 
${n \choose d}$ times.  This would make the homology trivial, and would leave no faces remaining in the $q$-reducing set.  We now make this heuristic rigorous, albeit with a slightly worse constant.

%
%

\begin{lemma} \label{froome}
Let $Y$ be a $d$-complex and let $f_1, f_2, \dots$ be an ordering of $F_d(Y)$.
Let $Y_i$ be the $d$-complex with $$F_d(Y_i)=\bigcup_{j=1}^{i}\{f_j\}.$$ Then there are at most ${n \choose d}$ $i$ such that
$$f_i \in \qrs(Y_{i-1}). $$
\end{lemma}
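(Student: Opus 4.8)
**The plan is to track the rank of $H_{d-1}$ as faces are added one at a time, and observe that each face in the $q$-reducing set drops this rank by exactly one.**

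First I would recall the characterization from Lemma~\ref{shabby} together with the observation (implicit in the definition of $\qrs$ and in Lemma~\ref{djimi}) that $f_i \in \qrs(Y_{i-1})$ precisely when adding $f_i$ to $Y_{i-1}$ decreases $\dim_{\Z/q\Z} H_{d-1}$ by one; otherwise the dimension is unchanged. (This is a standard fact: adding a single $d$-cell to a complex either kills a $(d-1)$-cycle class, dropping $\dim H_{d-1}$ by one, or creates a new $d$-cycle, raising $\dim H_d$ by one, and the two cases are mutually exclusive.) Thus the function $i \mapsto \dim_{\Z/q\Z} H_{d-1}(Y_i)$ is nonincreasing in $i$, and it decreases by exactly $1$ at each step $i$ with $f_i \in \qrs(Y_{i-1})$, and is constant at all other steps.

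Next I would bound the total decrease. The initial complex $Y_0$ has complete $(d-1)$-skeleton and no $d$-faces, so $H_{d-1}(Y_0; \Z/q\Z) = Z_{d-1}/B_{d-1}$ with $B_{d-1}=0$, hence $\dim_{\Z/q\Z} H_{d-1}(Y_0) = \dim_{\Z/q\Z} Z_{d-1} \le \dim_{\Z/q\Z} C_{d-1} = \binom{n}{d}$, the number of $(d-1)$-faces. Since the dimension is nonnegative at every stage and drops by exactly one at each index $i$ with $f_i \in \qrs(Y_{i-1})$, the number of such indices is at most $\dim_{\Z/q\Z} H_{d-1}(Y_0) \le \binom{n}{d}$, which is the claim. (Technically this counts $\qrs$-steps up to the end of the ordering, but the statement concerns exactly the ordering $f_1,\dots,f_{|F_d(Y)|}$ of $F_d(Y)$, so this suffices.)

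I do not expect a serious obstacle here; the statement is essentially bookkeeping once one has Lemma~\ref{shabby}. The one point requiring a sentence of care is the dichotomy ``drop $H_{d-1}$ by one or leave it fixed'' — i.e., that adding a single $d$-face never changes $\dim H_{d-1}$ by more than one and never increases it. This follows from Lemma~\ref{shabby} (membership in $\qrs$ is equivalent to $\partial f$ not being a $(\Z/q\Z)$-boundary in $Y_{i-1}$) combined with the rank-one perturbation of the boundary map $\delta_d$ when one column is appended: $\operatorname{rank} \delta_d$ increases by one iff $\partial f \notin B_{d-1}$, and $\dim H_{d-1} = \binom{n}{d} - \operatorname{rank}\delta_{d-1} - \operatorname{rank}\delta_d$ with only the last term affected. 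If a cleaner exposition is wanted, one can phrase the whole argument purely in terms of $\operatorname{rank}\delta_d$ over the field $\Z/q\Z$ increasing by at most one per added column and being bounded by $\binom{n}{d}$.
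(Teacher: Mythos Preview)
Your argument is correct and is essentially the same as the paper's: both track $\dim_{\Z/q\Z} H_{d-1}(Y_i)$ (equivalently $|H_{d-1}(Y_i;\Z/q\Z)|$), note it starts at most $\binom{n}{d}$, is nonincreasing, and drops by one at each $q$-reducing step. Your write-up is in fact more detailed than the paper's two-line induction, but the underlying idea is identical.
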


\begin{proof}
By induction. If there exist a subsequence $0<i_1<i_2<\dots <i_{s}$ with
$$f_{i_s}\in \qrs(Y_{i_s-1})$$
then 
$$|H_{d-1}(Y_{i_s},\Z/q\Z)| \leq q^{{n \choose d}-s}.$$
Thus the longest possible subsequence has length ${n \choose d}$.
\end{proof}

\begin{lemma} \label{spokane}
For any $q$ and any  $n>d$ we have $\tilde m \leq 4{n \choose d}$.
\end{lemma}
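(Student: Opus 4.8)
I want to bound $\tilde m$ by relating the expected size of the $q$-reducing set in $Y_d(n, m)$ to the combinatorial fact from Lemma~\ref{froome}. Recall $\tilde m$ is the first $m'$ for which $\E|\qrs(Y(n,m'))| \le \tfrac12\binom{n}{d+1}$. The key observation is that if I run the face-by-face process of Lemma~\ref{froome} on a uniformly random ordering of \emph{all} $\binom{n}{d+1}$ faces, then Lemma~\ref{froome} says that across the entire process at most $\binom{n}{d}$ of the faces $f_i$ satisfy $f_i \in \qrs(Y_{i-1})$. Equivalently, summing the indicator $\mathbf{1}[f_i \in \qrs(Y_{i-1})]$ over all $i$ from $1$ to $\binom{n}{d+1}$ gives at most $\binom{n}{d}$, \emph{always}, hence also in expectation.

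First I would set $N = \binom{n}{d+1}$ and, using a uniformly random permutation of the faces, note that $Y_{i-1}$ has exactly the distribution $Y_d(n, i-1)$, and conditioned on $Y_{i-1}$, the next face $f_i$ is uniform among the remaining $N-(i-1)$ faces. So $\pr(f_i \in \qrs(Y_{i-1}) \mid Y_{i-1}) = |\qrs(Y_{i-1})| / (N - (i-1))$, and taking expectations, $\pr(f_i \in \qrs(Y_{i-1})) = \E|\qrs(Y_d(n,i-1))| / (N-i+1)$. Summing over $i$ and invoking Lemma~\ref{froome},
\[
\sum_{i=1}^{N} \frac{\E\big|\qrs(Y_d(n,i-1))\big|}{N-i+1} \;\le\; \binom{n}{d}.
\]
Next I would exploit monotonicity: by Lemma~\ref{shabby}, $\E|\qrs(Y_d(n,m))|$ is non-increasing in $m$ (adding faces can only shrink the reducing set, and one can couple $Y_d(n,m) \subset Y_d(n,m+1)$ in the $Y(n,m)$ model). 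So for every $m \le \tilde m - 1$ we have $\E|\qrs(Y_d(n,m))| > \tfrac12 \binom{n}{d+1} = \tfrac12 N$ by definition of $\tilde m$. Restricting the sum above to the range $i-1 \le \tilde m - 1$, i.e. $i \le \tilde m$, each term is at least $\tfrac12 N / (N-i+1) \ge \tfrac12 N / N = \tfrac12$, so the partial sum is at least $\tfrac12 \tilde m$. Combining, $\tfrac12 \tilde m \le \binom{n}{d}$, giving $\tilde m \le 2\binom{n}{d}$ — in fact even better than the claimed $4\binom{n}{d}$, so the stated bound follows with room to spare (the slack presumably absorbs the later conversion between $Y_d(n,m)$ and $Y_d(n,p)$, or a minor subtlety about strict versus non-strict inequality at $m = \tilde m - 1$).

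The main obstacle I anticipate is making the distributional identification clean: one must be careful that in the $Y_d(n,m)$ model the step-$i$ complex really is distributed as $Y_d(n,i-1)$ under a uniform random ordering, and that the conditional law of the newly added face is uniform among the unused faces — this is the standard coupling between the "process" view and the "uniform $m$-subset" view, cited via \cite{JLR} in Lemma~\ref{whitecaps}, so it should go through routinely. A secondary point to handle with care is the monotonicity of $\E|\qrs|$ in $m$: Lemma~\ref{shabby} gives monotonicity under inclusion, and since $Y_d(n,m)$ can be coupled inside $Y_d(n,m+1)$, the expectation is monotone, but I should state this coupling explicitly rather than leave it implicit. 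Everything else is a one-line averaging argument.
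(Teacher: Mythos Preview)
Your argument is correct, and it is genuinely different from the paper's proof. The paper also runs the uniformly random face-by-face process and invokes Lemma~\ref{froome}, but instead of your expectation identity it introduces a stopping time $m^* = \min\{m' : |\qrs(Y_{m'})| \le \tfrac13 N\}$, couples the indicators $Z_i = \mathbf{1}_{\{f_i \in \qrs(Y_{i-1})\}}$ from below by i.i.d.\ Bernoulli$(1/3)$ variables up to time $m^*$, and then uses a concentration bound to show $\pr(m^* > 4\binom{n}{d}) < 1/10$; the conclusion $\E|\qrs(Y_{4\binom{n}{d}})| \le \tfrac12 N$ follows by splitting on whether $m^*$ is small or not. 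Your route bypasses the coupling and concentration entirely: the identity $\pr(f_i \in \qrs(Y_{i-1})) = \E|\qrs(Y_d(n,i-1))|/(N-i+1)$ together with monotonicity of $m \mapsto \E|\qrs(Y_d(n,m))|$ reduces everything to a one-line averaging, and even yields the sharper bound $\tilde m \le 2\binom{n}{d}$ that the paper's heuristic predicted but did not prove. The only point worth making explicit in a write-up is that every face already in $Y_{i-1}$ is automatically \emph{not} in $\qrs(Y_{i-1})$ (since $Y_{i-1}\cup f = Y_{i-1}$), so $\qrs(Y_{i-1})$ really is a subset of the $N-(i-1)$ unused faces and your conditional probability formula is exact; you implicitly used this but did not say it.
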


\begin{proof}
Let $f_1,f_2, \dots, f_{{n \choose d+1}}$ be a uniformly random ordering of all the possible $d$-faces.
Again we define the complexes $Y_i$ by $$F_d(Y_i)=\bigcup_{j=1}^{i}\{f_j\},$$
and we remark that each $F_d(Y_i)$ is distributed as $Y_d(n,m).$
Define the random variables $$Z_i={\bf 1}_{\{f_i \in \qrs(Y_{i-1})\}}.$$
and $\{X_i\}$ be an i.i.d.\ sequence of Bernoulli(1/3) random variables.
We can couple the events 
so that $Z_i$ stochastically dominates $X_i$ up until the random time $m^*$,
where
$$m^*=\min\left(m':\ |\qrs(Y_{m'})| \leq 
\frac{1}{3}\binom{n}{d+1}
\right).$$
Thus by Lemma \ref{froome} we have a.s.\ that
$${n \choose d} \geq \sum_{i=1}^{m^*}Z_i \geq \sum_{i=1}^{m^*}X_i.$$
So either
\begin{enumerate}
\item $m^* \leq 4{n \choose d}$ or 
\item $\sum_{i=1}^{4{n \choose d}}X_i <  {n \choose d}$. \label{brunch}
\end{enumerate}
The sum on the left hand side of \ref{brunch} has expected value $\frac43{n \choose d}$ which is a constant factor larger than ${n \choose d}.$ Thus the probability of the last event is exponentially decreasing in ${n \choose d},$ and so it is certainly less than 1/10.
Thus $\pr(m^* > 4{n \choose d})<1/10$ as well.

\begin{align*}
\E \big| \qrs\left(Y_{4{n \choose d}}\right)\big|
&\leq \frac{1}{3}\binom{n}{d+1}\cdot \pr\left(m^* \leq 4{n \choose d}\right)\\
&\hspace{0.5in} + \binom{n}{d+1}\pr\left(m^*>4{n \choose d}\right)\\
&\leq \frac{1}{3}\binom{n}{d+1} +\frac{1}{10}\binom{n}{d+1}\\
&\leq \frac{1}{2}\binom{n}{d+1}.
\end{align*}
Thus $\tilde m \leq 4 {n \choose d}.$
\end{proof}

\begin{lemma} \label{cotton}
Let $Y \sim Y_d(n,m)$. 
For $$m \geq (12d+12)(\log n) {n \choose d}.$$
 and any prime $q$
$$\pr\bigg(H_{d-1}(Y;\Z/q\Z)\neq 0\bigg)\leq\frac{1}{2n^{d+1}}. $$
\end{lemma}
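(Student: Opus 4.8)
The plan is to combine the estimate $\tilde m \le 4\binom{n}{d}$ from Lemma \ref{spokane} with the per-face bound of Lemma \ref{traore}, and then conclude via Lemma \ref{djimi} by a union bound over all $d$-faces. First I would observe that if $m \ge (12d+12)(\log n)\binom{n}{d}$, then writing $k = \lceil 2(d+1)\log_2 n\rceil$ (or something slightly larger), we have $m \ge k\tilde m$ for $n$ large, since $k\tilde m \le 2(d+1)\log_2(n)\cdot 4\binom{n}{d} \cdot (1+o(1)) = (8(d+1)/\log 2)(\log n)\binom{n}{d}(1+o(1))$, and $8/\log 2 \approx 11.5 < 12$; the constant $12d+12$ is chosen precisely so that this inequality holds with room to spare for large $n$. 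Hence by the stochastic domination in Lemma \ref{whitecaps}, $Y\sim Y_d(n,m)$ dominates $Y'\sim Y_d(n,k\tilde m)$ in the sense that we may couple them with $F_d(Y')\subset F_d(Y)$, and therefore $\qrs(Y)\subset \qrs(Y')$ by the monotonicity in Lemma \ref{shabby}.

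Next, Lemma \ref{traore} gives, for each fixed $d$-face $f$, that $\pr(f\in\qrs(Y')) \le n^{-2(d+1)}$. Taking a union bound over all $\binom{n}{d+1} \le n^{d+1}$ possible $d$-faces, we get
\[
\pr\bigl(\qrs(Y')\neq\emptyset\bigr) \le n^{d+1}\cdot n^{-2(d+1)} = n^{-(d+1)} \le \frac{1}{2n^{d+1}} \cdot 2,
\]
which I would sharpen slightly: since we have freedom in choosing $k$ a bit larger (say $k = \lceil (2d+3)\log_2 n\rceil$, still compatible with the constant $12d+12$ for large $n$), Lemma \ref{traore}'s proof actually yields $\pr(f\in\qrs(Y'))\le 2^{-k}\le n^{-(2d+3)}$, so the union bound gives $\pr(\qrs(Y')\neq\emptyset)\le n^{d+1}\cdot n^{-(2d+3)} = n^{-(d+2)} \le \tfrac{1}{2}n^{-(d+1)}$ for $n \ge 2$. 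Since $\qrs(Y)\subset\qrs(Y')$, we conclude $\pr(\qrs(Y)\neq\emptyset)\le \tfrac{1}{2}n^{-(d+1)}$, and by Lemma \ref{djimi} this is exactly $\pr(H_{d-1}(Y;\Z/q\Z)\neq 0)\le \tfrac{1}{2}n^{-(d+1)}$.

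The only real point requiring care — and the step I expect to be the main obstacle — is the bookkeeping of constants: verifying that the hypothesis $m \ge (12d+12)(\log n)\binom{n}{d}$ really does imply $m \ge k\tilde m$ for the value of $k$ needed to push the per-face probability below $n^{-(2d+3)}$ (or whatever power makes the final union bound work), uniformly in the prime $q$ and for all sufficiently large $n$. This is purely a matter of tracking the numerical factors $4$ (from Lemma \ref{spokane}), $2(d+1)$ or $2d+3$ (the exponent needed), and $1/\log 2$ (converting $\log_2$ to $\log$), together with the fact that $\binom{n}{d+1}/\binom{n}{d} = (n-d)/(d+1)$ and $\binom{n}{d+1}\le n^{d+1}$; no new ideas are needed beyond the lemmas already proved, and since the paper explicitly disclaims optimizing constants, there is ample slack. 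I would also note in passing that $\tilde m$ depends on $q$, so it is important that Lemma \ref{spokane}'s bound $\tilde m \le 4\binom{n}{d}$ holds uniformly in $q$, which it does.
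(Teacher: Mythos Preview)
Your approach is essentially the same as the paper's: bound $\tilde m$ via Lemma \ref{spokane}, feed this into Lemma \ref{traore} for a per-face bound, take a union bound over all $d$-faces, and conclude via Lemma \ref{djimi}. The explicit invocation of Lemma \ref{whitecaps} to pass from $Y_d(n,m)$ to $Y_d(n,k\tilde m)$ is a nice touch that the paper leaves implicit.

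The one place you go astray is in recovering the factor $1/2$. Your proposed fix of bumping $k$ to $\lceil(2d+3)\log_2 n\rceil$ does \emph{not} stay compatible with the constant $12d+12$ for small $d$: you would need $4(2d+3)/\log 2 \le 12(d+1)$, i.e.\ $8d+12 \le (12\log 2)(d+1) \approx 8.32d + 8.32$, which fails for every $d \le 11$. The paper avoids this entirely by observing that the union bound is over $\binom{n}{d+1}$ faces, not $n^{d+1}$: since $\binom{n}{d+1} \le n^{d+1}/(d+1)! \le n^{d+1}/2$ for $d \ge 1$, one gets directly
\[
\binom{n}{d+1}\cdot n^{-2(d+1)} \le \tfrac{1}{2}\,n^{-(d+1)},
\]
with no adjustment to $k$ needed. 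Replace your sharpening step with this observation and your argument is complete.
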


\begin{proof}
First of all, $(12d+12)(\log n) > (8d+8)(\log_2 n)$, since $\log 2 > 2/3$.
Then by Lemma \ref{spokane} 
\begin{align*}
(8d+8)(\log_2 n){n \choose d} &=(2d+2)(\log_2 n)\left(4{n \choose d}\right) \\
& \geq (2d+2)(\log_2 n) \tilde m.\\
\end{align*}

By Lemma \ref{traore} and the union bound, we have 
\begin{align*}
\pr\bigg(H_{d-1}(Y;\Z/q\Z)\neq 0\bigg) & \leq {n \choose d+1}\frac{1}{n^{2(d+1)}}\\
& \leq \frac{1}{2n^{d+1}}.
\end{align*}
\end{proof}

\begin{pfofthm}{\ref{thm:main2}} 
If $$p \ge \frac{40d \log n}{n},$$ then by applying Chernoff bounds, with probability at least $$1-\frac{1}{2n^{d+1}}$$
a random $d$-complex $Y \sim Y_d(n,p)$ has at least $(12 d + 12)(\log n){n \choose d}$ faces of dimension $d$. 
Then the theorem follows from Lemma \ref{cotton}.
\end{pfofthm}

\section*{Acknowledgements}

The authors thank Nati Linial and Roy Meshulam for many helpful and encouraging conversations.

\medskip

C.H.\ gratefully acknowledges support from NSF grant DMS-1308645 and NSA grant H98230-13-1-0827. M.K.\ gratefully acknowledges support from the Alfred P.\ Sloan Foundation, from DARPA grant N66001-12-1-4226, and from NSF grant CCF-1017182. E.P.\ gratefully acknowledges support from NSF grant DMS-0847661.

\bibliographystyle{plain}
\bibliography{homrefs}

\def\polhk#1{\setbox0=\hbox{#1}{\ooalign{\hidewidth
  \lower1.5ex\hbox{`}\hidewidth\crcr\unhbox0}}}
\begin{thebibliography}{10}

\bibitem{BHK11}
Eric Babson, Christopher Hoffman, and Matthew Kahle.
\newblock The fundamental group of random 2-complexes.
\newblock {\em J. Amer. Math. Soc.}, 24(1):1--28, 2011.

\bibitem{BS99}
W.~Ballmann and J.~{\'S}wi{\polhk{a}}tkowski.
\newblock On {$L^2$}-cohomology and property ({T}) for automorphism groups of
  polyhedral cell complexes.
\newblock {\em Geom. Funct. Anal.}, 7(4):615--645, 1997.

\bibitem{FB99}
Ehud Friedgut.
\newblock Sharp thresholds of graph properties, and the {$k$}-sat problem.
\newblock {\em J. Amer. Math. Soc.}, 12(4):1017--1054, 1999.
\newblock With an appendix by Jean Bourgain.

\bibitem{Garland}
Howard Garland.
\newblock {$p$}-adic curvature and the cohomology of discrete subgroups of
  {$p$}-adic groups.
\newblock {\em Ann. of Math. (2)}, 97:375--423, 1973.

\bibitem{hkp12}
Christopher Hoffman, Matthew Kahle, and Elliot Paquette.
\newblock Spectral gaps of random graphs and applications to random topology.
\newblock (submitted), arXiv:1201.0425, 2014.

\bibitem{JLR}
Svante Janson, Tomasz {\L}uczak, and Andrzej Rucinski.
\newblock {\em Random graphs}.
\newblock Wiley-Interscience Series in Discrete Mathematics and Optimization.
  Wiley-Interscience, New York, 2000.

\bibitem{kahle14}
Matthew Kahle.
\newblock Sharp vanishing threshold for cohomology of random flag complexes.
\newblock to appear in Ann.\ of Math., 2014.

\bibitem{Kalai83}
Gil Kalai.
\newblock Enumeration of {${\bf Q}$}-acyclic simplicial complexes.
\newblock {\em Israel J. Math.}, 45(4):337--351, 1983.

\bibitem{LM}
Nathan Linial and Roy Meshulam.
\newblock Homological connectivity of random 2-complexes.
\newblock {\em Combinatorica}, 26(4):475--487, 2006.

\bibitem{MW09}
R.~Meshulam and N.~Wallach.
\newblock Homological connectivity of random {$k$}-dimensional complexes.
\newblock {\em Random Structures Algorithms}, 34(3):408--417, 2009.

\bibitem{Soule99}
C.~Soul{\'e}.
\newblock Perfect forms and the {V}andiver conjecture.
\newblock {\em J. Reine Angew. Math.}, 517:209--221, 1999.

\bibitem{zuk2}
Andrzej {\.Z}uk.
\newblock La propri\'et\'e ({T}) de {K}azhdan pour les groupes agissant sur les
  poly\`edres.
\newblock {\em C. R. Acad. Sci. Paris S\'er. I Math.}, 323(5):453--458, 1996.

\bibitem{Zuk}
Andrzej {\.Z}uk.
\newblock Property ({T}) and {K}azhdan constants for discrete groups.
\newblock {\em Geom. Funct. Anal.}, 13(3):643--670, 2003.

\end{thebibliography}

\end{document}